\def\qed{\hfill {\hbox{${\vcenter{\vbox{               
   \hrule height 0.4pt\hbox{\vrule width 0.4pt height 6pt
   \kern5pt\vrule width 0.4pt}\hrule height 0.4pt}}}$}}}
\def\tr{\triangleright}
\newtheorem{theorem}{Theorem}
\newtheorem{corollary}[theorem]{Corollary}
\theoremstyle{definition}
\newtheorem{example}{Example}
\newtheorem{definition}{Definition}
\newtheorem{remark}{Remark}
\date{}
\title{\Large \textbf{Quandle Module Quivers}}
\author{
Karma Istanbouli\footnote{Email: kistanbo3308@scrippscollege.edu}
 \and
Sam Nelson\footnote{Email: Sam.Nelson@cmc.edu.}}
\begin{document}
\maketitle

\begin{abstract}
We enhance the quandle coloring quiver invariant of oriented knots and links
with quandle modules. This results in a two-variable polynomial invariant
with specializes to the previous quandle module polynomial invariant
as well as to the quandle counting invariant. We provide example computations
to show that the enhancement is proper in the sense that it distinguishes 
knots and links with the same quandle module polynomial.
\end{abstract}

\parbox{6in} {\textsc{Keywords:} Quandle Quivers, Quandle Modules, Enhancements
of quandle counting invariants

\smallskip

\textsc{2020 MSC:} 57K12}

\section{\large\textbf{Introduction}}\label{I}

\textit{Quandles} are algebraic structures with axioms corresponding to the 
Reidemeister moves for oriented knots and links. In \cite{J} and \cite{M}, 
a quandle
is associated to each such knot or link $L$, known as the \textit{knot 
quandle}; let us denote this by $Q(L)$. Given a finite quandle $X$, the set of
quandle homomorphisms $\mathrm{Hom}(Q(L),X)$ from the knot quandle to $X$
is an invariant of oriented links (To be precise, the abstract set of 
homomorphisms does not change when we do Reidemsiter moves, though 
our practical ways of writing them change with diagrams analogously to 
writing the same linear transformation with respect to different bases).
The elements of this set can be visualized
as \textit{colorings} of a diagram $D$ of $L$ by $X$, i.e., assignments of
elements of $X$ to the arcs in $D$ satisfying the \textit{quandle coloring
condition}
at every crossing. 
The number of such colorings, denoted by
$\Phi_X^{\mathbb{Z}}(L)=|\mathrm{Hom}(Q(L),X)|$, is an integer-valued oriented 
link invariant known as the \textit{quandle counting invariant.}

Any invariant $\phi$ of quandle colored knots and links defines an invariant
of uncolored oriented knots and links by taking the multiset of $\phi$-values 
over the set of quandle colorings of our knot or link. For large multisets
it can be helpful to encode the multiset as a polynomial by writing the elements
as powers of a formal variable $u$ with multiplicities as coefficients; this
notation has the advantage that evaluation at $u=1$ yields the quandle counting
invariant. This type of invariant is known as an \textit{enhancement}; see
\cite{EN} and the references therein (especially \cite{J,M}) for more.

A type of enhancement known as \textit{Quandle coloring quivers} were 
introduced in \cite{CN} by the second listed author and coauthor Karina Cho. 
Given a set of endomorphisms 
$f\in\mathrm{End}(X)$ of a finite quandle $X$, a quiver-valued invariant
$\mathcal{Q}_X(L)$ of oriented knots and links $L$ is defined, categorifying
the quandle counting invariant.

In \cite{AG} the notion of \textit{rack modules} with coefficients
in a commutative ring, including the special case of \textit{quandle modules},
was introduced. In later work such as \cite{CEGS,HHNYZ} quandle modules were
used to enhance the quandle counting invariant, obtaining a one-variable
polynomial invariant of oriented knots and links known as the \textit{quandle 
module polynomial} which evaluates to the quandle counting invariant at $u=1$.

In this paper we define \textit{quandle module quivers}, using quandle modules
to enhance the quandle coloring quiver analogously to the use of quandle 
cocycles in \cite{CN2}. As a result we obtain a new two-variable polynomial
which specializes to the quandle module polynomial but can distinguish links 
with the same quandle module polynomial value.

The paper is organized as follows. In Section \ref{B} we review the basics
of quandles, quandle modules and quandle coloring quivers. In Section 
\ref{QMQ} we define quandle module quivers and compute some examples. In 
particular, we show by example that the new invariant is stronger than the 
previous invariant. We conclude in Section \ref{QQQ} with some questions for 
future work.

\section{\large\textbf{Quandles, Quandle Modules and Quivers}}\label{B}

In this section we review the basics of quandles, quandle modules and quandle
coloring quiver. We begin with a definition (see \cite{EN} and the references 
therein).

\begin{definition} 
A \textit{quandle} is a set $X$ with an operation $\tr$ satisfying the 
conditions 
\begin{itemize}
\item[(i)] For all $x\in X$, $x\tr x=x$,
\item[(ii)] For all $y\in X$, the map $f_y:X\to X$ defined by $f_y(X)=x\tr y$ is
invertible, and
\item[(iii)] For all $x,y,z\in X$ we have
\[(x\tr y)\tr z= (x\tr z)\tr (y\tr z).\]
If $f_y^{-1}=f_y$, i.e., if $(x\tr y)\tr y=x$ for all $x,y\in X$, then we say
$X$ is a \textit{kei} or \textit{involutory quandle}.
\end{itemize} See \cite{J,M} for more.
\end{definition}

\begin{example}
Examples of quandles include
\begin{itemize}
\item Every group is a kei under the \textit{core operation} $x\tr y=yx^{-1}y$,
\item Every group is a quandle under \textit{conjugation} $x\tr y=y^{-1}xy$,
\item Every $\mathbb{Z}[t^{\pm 1}]$-module is a quandle (known as an \textit{Alexander quandle} under $x\tr y=tx+(1-t)y$,
\item Every finite set $X=\{1,2,\dots, n\}$ can be given a quandle structure via an operation table satisfying the quandle axioms,
\end{itemize}
and many more such structures.
\end{example}

\begin{definition}
Let $K$ be an oriented link diagram and $X$ a quandle. An assignment of 
elements of $X$ to the arcs in $K$ is a \textit{quandle coloring of $K$ by $X$}
or an \textit{$X$-colorings of $K$} if at every crossing in $K$ we have
\[\includegraphics{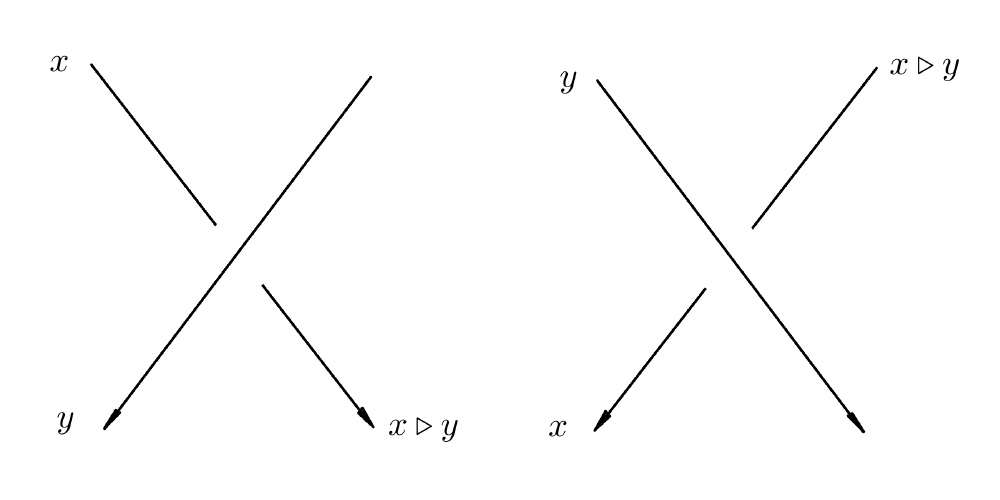}.\]
We will denote the set of $X$-colorings of $L$ by $\mathcal{C}(L,X)$.
\end{definition}

The quandle axioms are chosen so that for every oriented Reidemeister move,
$X$-colorings on one side of the move correspond bijectively with $X$-colorings
on the other side of the move (see \cite{EN} for more details). Thus we have:

\begin{theorem} (See \cite{J,M} etc.)
The number of $X$-colorings of an oriented knot or link diagram $K$ is an 
invariant of oriented knots and links, known as the \textit{quandle counting 
invariant}, denoted $\Phi_X^{\mathbb{Z}}(K)=|\mathcal{C}(L,X)|$.
\end{theorem}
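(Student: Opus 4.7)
The plan is to prove this by invoking Reidemeister's theorem: two oriented link diagrams represent the same oriented link if and only if they are related by a finite sequence of oriented Reidemeister moves (together with planar isotopy). So it suffices to exhibit, for each oriented Reidemeister move, a bijection between the set of $X$-colorings of the diagram on one side of the move and the set of $X$-colorings on the other side. Given such bijections, the cardinality $|\mathcal{C}(L,X)|$ is preserved under every move, hence depends only on the oriented link $L$ and not on the chosen diagram $K$.

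First I would handle the moves one by one, checking that each quandle axiom matches exactly one move. For an R1 move, a small kink forces the single arc passing through the crossing to be colored $x\tr x$ on one side and $x$ on the other, so the bijection reduces to the identity precisely because axiom (i) gives $x\tr x = x$. For an R2 move, away from the move the colorings are unchanged; locally, one must check that the coloring on the two newly created arcs is uniquely determined by the outside colors, which is exactly the statement that $f_y$ is a bijection (axiom (ii)): given inputs $x$ and $y$ we can reconstruct the interior arc as $x\tr y$ (or $f_y^{-1}(x)$, depending on orientation) and verify the outgoing arc matches the incoming one. For an R3 move, comparing the colorings induced on the six external arcs on each side of the move reduces, after a direct computation at each of the three crossings, to the self-distributivity identity $(x\tr y)\tr z = (x\tr z)\tr(y\tr z)$ from axiom (iii).

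In each case, the bijection is given explicitly by ``propagate the colors along the arcs using the crossing rule,'' and one checks that this propagation is consistent with the coloring condition on both sides of the move. Since $X$-colorings outside the local region of the move are unaffected, local bijections assemble into a global bijection $\mathcal{C}(K,X)\to \mathcal{C}(K',X)$ whenever $K$ and $K'$ differ by a single move.

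The main obstacle, and the only step requiring real care, is the R3 verification, because one must track all six arc colors through three crossings with consistent over/under information and orientations; in particular one must confirm that the oriented versions of R3 all follow from axiom (iii) together with the invertibility in axiom (ii), rather than requiring additional identities. Once this is done, combining the three local bijections with Reidemeister's theorem gives invariance of $|\mathcal{C}(L,X)|$, establishing $\Phi_X^{\mathbb{Z}}(K)$ as a well-defined invariant of oriented knots and links.
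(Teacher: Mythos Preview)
Your proposal is correct and follows exactly the approach the paper indicates: the paper does not give a detailed proof but simply remarks (just before the theorem) that ``the quandle axioms are chosen so that for every oriented Reidemeister move, $X$-colorings on one side of the move correspond bijectively with $X$-colorings on the other side of the move,'' citing \cite{EN,J,M} for details. Your write-up is a faithful expansion of that one-sentence sketch, matching each axiom to the corresponding Reidemeister move.
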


\begin{example}\label{ex:1}
Let $X=\{1,2,3,4\}$ have the quandle structure given by
\begin{equation}\begin{array}{r|rrrr}
\tr & 1 & 2 & 3 & 4 \\ \hline
1 & 1 & 3 & 4 & 2 \\
2 & 4 & 2 & 1 & 3 \\
3 & 2 & 4 & 3 & 1 \\
4 & 3 & 1 & 2 & 4.
\end{array}\label{q1}
\end{equation}
The reader can verify that the $(4,2)$-torus link $L$ has sixteen $X$-colorings
\[\includegraphics{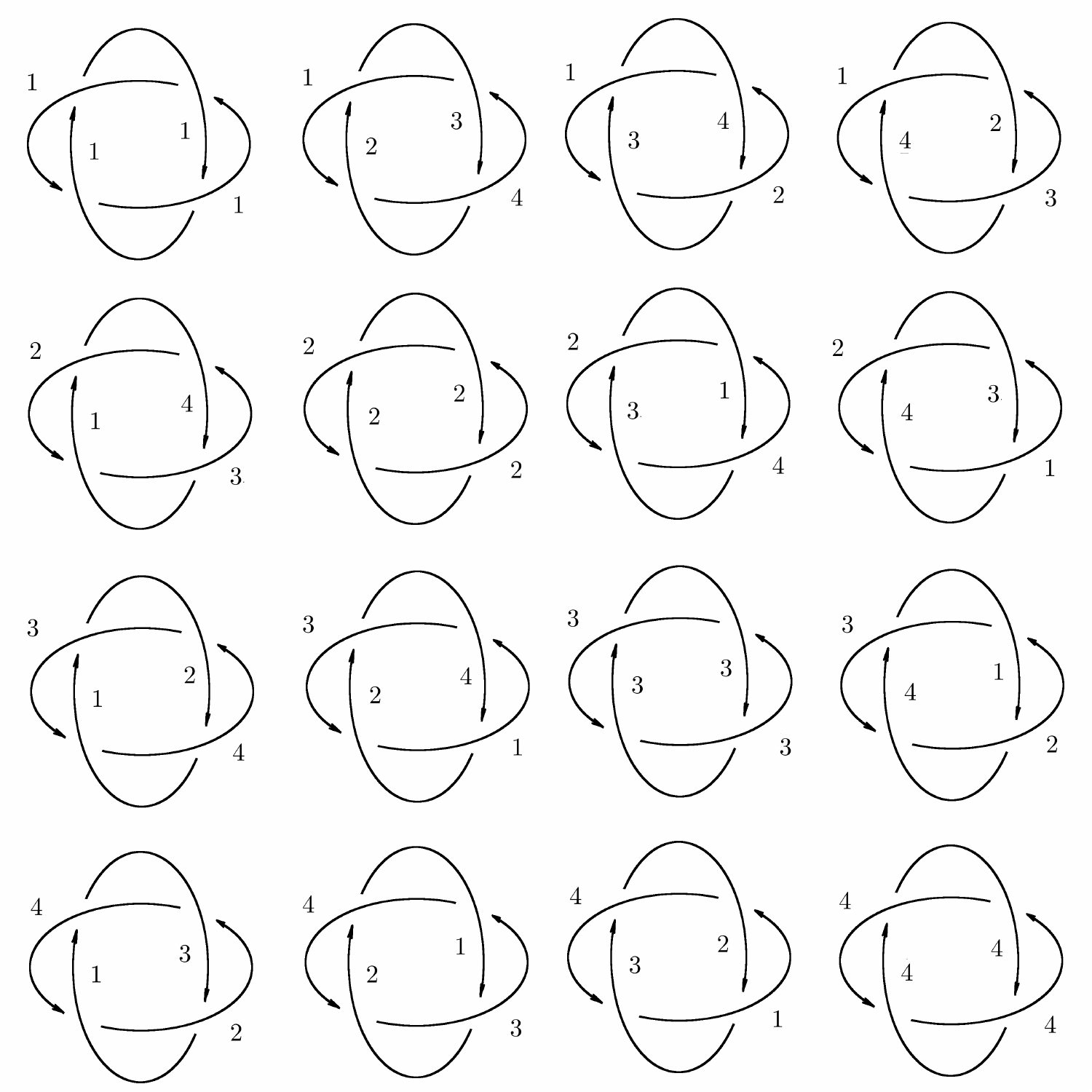}\]
while the Hopf link $L'$ has only four
\[\includegraphics{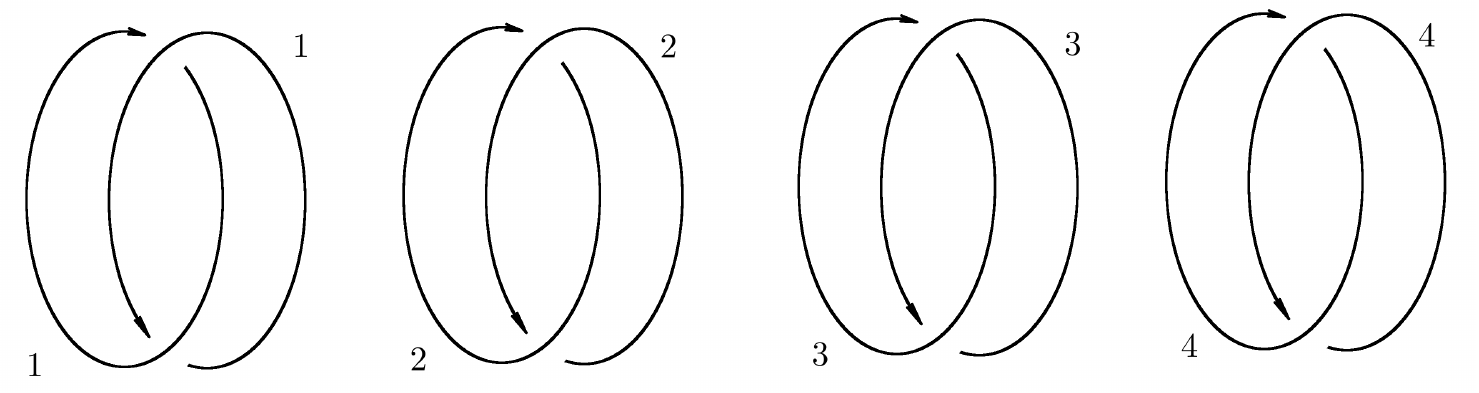}.\]
Hence, $\Phi_{X}^{\mathbb{Z}}(L)\ne \Phi_{X}^{\mathbb{Z}}(L')$ and the
counting invariant distinguishes the links.
\end{example}

Next, we recall quandle coloring quivers; see \cite{CN}.
Recall that a \textit{quandle endomorphism} is a function
$f:X\to X$ such that for all $x,y\in X$ we have
\[f(x\tr y)=f(x)\tr f(y).\]
Then for every $X$-coloring of an oriented link diagram $L$, applying $f$
to each color yields another $X$-coloring. Then for any oriented link
$L$, the \textit{quandle coloring quiver}
$\mathcal{Q}^f_X(L)$
associated to the quandle endomorphism $f$ is the directed graph with
one vertex for each coloring and a directed edge from each vertex to
the vertex associated to the coloring obtained by applying $f$ to each color.

\begin{example}\label{ex:2}
Let $X$ be the quandle (\ref{q1}) from example \ref{ex:1}.
The reader can verify that the map $f:X\to X$ given by
$[f(1),f(2),f(3),f(4)]=[2,4,3,1]$ is an endomorphism. Then the quandle 
coloring quiver $\mathcal{Q}_X^f(L)$ for the (4,2)-torus link $L$ looks like
\[\scalebox{1.25}{\includegraphics{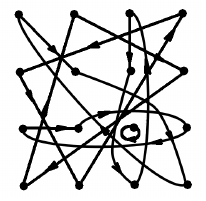}}\] 
or drawn more symmetrically,
\[\includegraphics{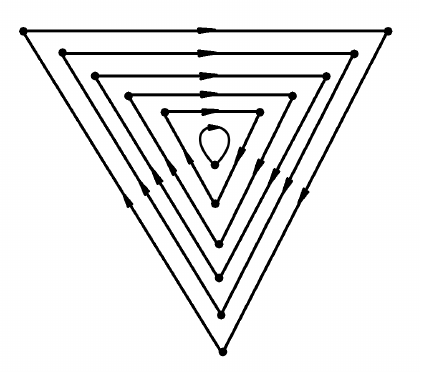}.\]
\end{example}

Since the set $\mathrm{Hom}(Q(L),X)$ is an invariant of oriented links, for
any $f\in\mathrm{End}(X)$ the quandle coloring quiver $\mathcal{Q}_X^f(L)$
is also an invariant. To be precise, the abstract homset 
$\mathrm{Hom}(Q(L),X)$ and endomorphism $f:X\to X$ together determine 
$\mathcal{Q}_X^f(L)$, and neither of these is dependent on our choice of 
diagram for $L$. In \cite{CN} examples are given which show
that knots and links with the same number of quandle colorings by a
chosen coloring quandle $X$ can be distinguished by the isomorphism type
of their quandle coloring quivers. Moreover, since a quiver is a category,
$\mathcal{Q}_X^f(L)$ is a categorification of the quandle counting invariant
with decategorification given by counting vertices in the quiver.

Now, let $X$ be a quandle and let
$R$ be a commutative ring with identity and let $t,s:X^2\to R$
be maps from the set of pairs of elements of $X$ to $R$.

\begin{definition} (See \cite{HHNYZ}.)
We say that $t,s$ define a \textit{quandle module} or \textit{$X$-module}
with coefficients in $R$ if for all $x,y,z\in X$ we have $t_{x,y}\in R^{\times}$ 
and
\[\begin{array}{rcl}
t_{x,x}+s_{x,x} & = & 1\\
t_{x\tr y,z}t_{x,y} & = & t_{x\tr z, y\tr z} t_{x,z} \\
t_{x\tr y, z}s_{x,y} & = & s_{x\tr y, z}t_{y,z} \\
s_{x\tr y, z} & = & t_{x\tr z, y\tr z} s_{x,z} +s_{x\tr z, y\tr z} s_{y,z}.
\end{array}\]

\end{definition}

Given a quandle module $s,t$ and a quandle-colored tame link diagram $L_C$, we
obtain the \textit{quandle module matrix} $M_{t,s}(L_c)$ representing the
homogeneous system of crossing equations over $R$ determined by $L_C$
according to the rule
\[\includegraphics{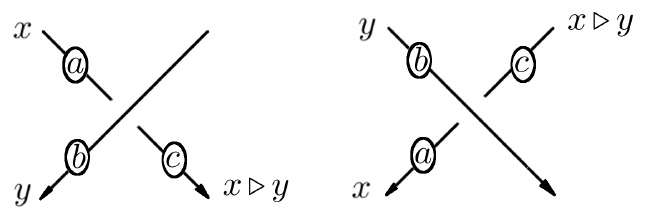} \quad \raisebox{0.4in}{$c=t_{x,y}a+s_{x,y}b$}.\]
Here the circled $a,b,c$ are ``beads'' assigned to each arc in the link
diagram, representing elements of $R$.

\begin{example}\label{ex:qminv}
The figure eight knot $L=4_1$ has 16 colorings by the quandle $X$ with 
operation table
\[\begin{array}{r|rrrr}
\tr & 1 & 2 & 3 & 4 \\ \hline
1 & 1 & 4 & 2 & 3\\
2 & 3 & 2 & 4 & 1 \\
3 & 4 & 1 & 3 & 2\\
4 & 2 & 3 & 1 & 4
\end{array}\]
including for instance 
\[L_1=\quad\raisebox{-0.5in}{\includegraphics{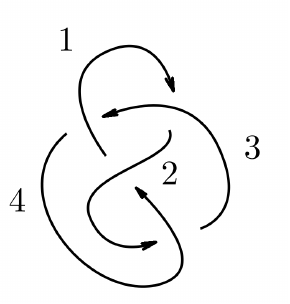}}.\]
Placing a bead on each arc, we have the diagram:
\[\includegraphics{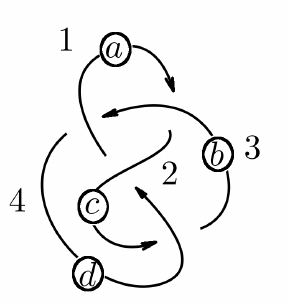}.\]
The quandle module $t,s$ over $X$ with $R=\mathbb{Z}_5$ coefficients
\[
\begin{array}{r|rrrr}
t & 1 & 2 & 3 & 4 \\\hline
1 & 4 & 2 & 4 & 1\\
2 & 1 & 4 & 2 & 4\\
3 & 3 & 1 & 4 & 1\\
4 & 3 & 2 & 3 & 4
\end{array}
\quad\quad
\begin{array}{r|rrrr}
s & 1 & 2 & 3 & 4 \\ \hline
1 & 2 & 1 & 3 & 1 \\
2 & 3 & 2 & 4 & 4 \\
3 & 1 & 2 & 2 & 4 \\
4 & 2 & 3 & 3 & 2
\end{array}
\]
then determines quandle module matrix
\[
M_{t,s}(L_1)=\left[\begin{array}{rrrr}
t_{13} & s_{13} & -1 & 0 \\
s_{31} & t_{31} & 0 & -1 \\
t_{12} & 0 & s_{12} & -1 \\
0 & t_{34} & -1 & s_{34} \\
\end{array}\right]
=\left[\begin{array}{rrrr}
4 & 3 & 4 & 0 \\
1 & 3 & 0 & 4 \\
2 & 0 & 1 & 4 \\
0 & 1 & 4 & 4 \\
\end{array}\right]
\]
which after row-reduction mod 5 becomes
\[\left[\begin{array}{rrrr}
1 & 3 & 0 & 4 \\
0 & 1 & 4 & 4 \\
0 & 0 & 0 & 0 \\
0 & 0 & 0 & 0 \\
\end{array}\right].\]
Thus this quandle coloring of this knot has quandle module matrix with 
kernel of dimension $2$ and hence $5^2=25$ bead colorings.
\end{example}

By construction (see also \cite{EN} and references therein) we have the 
following result:

\begin{theorem}
Let $X$ be a finite quandle, $R$ a commutative ring with identity, and 
$s,t:X^2\to R$ a quandle module. Then for any oriented link $L$, we define
a multiset-valued invariant of oriented links as follows:
\begin{itemize}
\item If $R$ is finite, we define 
\[\Phi^{M,t,s}_X(L)=\{|\mathrm{Ker} M_{t,s}(L_C) |\ |\ L_C\in\mathcal{C}(L,X)\}\]
and
\item If $R$ is infinite, we define
\[\Phi^{M,t,s}_X(L)=\{\mathrm{Rank}(\mathrm{Ker} M_{t,s}(L_C) )\ |\ L_C\in\mathcal{C}(L,X)\}\]
where $\mathrm{Rank}(S)$ is the number of generators of the free part of $S$
considered as an $R$-module. 
\end{itemize}
Then $\Phi^{M,t,s}_X$ is an invariant of oriented links.
\end{theorem}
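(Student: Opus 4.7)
The plan is to establish invariance by the standard Reidemeister-move strategy, leveraging the fact that the set of $X$-colorings $\mathcal{C}(L,X)$ is already known to be an oriented link invariant. For each oriented Reidemeister move, the bijection of $X$-colorings on the two sides of the move is already fixed; what I need to show is that this bijection lifts to a bijection (or at least a linear isomorphism) between the solution spaces of bead assignments, i.e., the kernels of the corresponding quandle module matrices. If I can do this, then $|\mathrm{Ker}\, M_{t,s}(L_C)|$ in the finite case, and $\mathrm{Rank}(\mathrm{Ker}\, M_{t,s}(L_C))$ in the infinite case, are preserved along the bijection of colorings, so the multiset is unchanged.

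Concretely, I would proceed move by move. For each Reidemeister move, fix a coloring on one side and its corresponding coloring on the other side. The beads on arcs external to the tangle of the move are common to both diagrams; the move introduces or eliminates some internal arcs together with the crossing equations at the internal crossings. The goal in each case is to show that the internal equations, using the coloring relations provided by the quandle axioms, allow one to solve uniquely for the internal beads in terms of the external ones, and that the induced relations on the external beads coincide on the two sides of the move. This will give row-equivalence (up to invertible change of basis in the internal bead variables) between the two matrices $M_{t,s}$, which is enough to preserve kernel size and kernel rank.

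The verifications distribute among the quandle module axioms exactly as one expects. Reidemeister I yields a single self-crossing equation of the form $a = t_{x,x} a + s_{x,x} a$, which reduces to the trivial equation $a=a$ via the axiom $t_{x,x}+s_{x,x}=1$ and hence adds no new constraint and no new free bead. Reidemeister II contributes a pair of crossing equations that, using invertibility $t_{x,y}\in R^\times$, can be solved for the two internal beads in terms of the external ones, leaving the external beads unconstrained; so the matrices on the two sides of R2 have the same kernel. Reidemeister III is the substantive case: eliminating the three internal beads from the three crossing equations on one side yields a relation among the external beads, and the same procedure on the other side yields an \emph{a priori} different relation; the equality of these two relations is precisely what the last three quandle module axioms enforce, as can be checked by expanding both sides in terms of the external beads labeled according to the coloring.

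The main obstacle is the Reidemeister III bookkeeping: there are several sign/orientation variants of R3 to check, and for each one must carefully list the external beads, expand the induced linear relations, and match coefficients against the axioms $t_{x\tr y,z}t_{x,y} = t_{x\tr z,y\tr z}t_{x,z}$, $t_{x\tr y,z}s_{x,y} = s_{x\tr y,z}t_{y,z}$, and $s_{x\tr y,z} = t_{x\tr z, y\tr z}s_{x,z} + s_{x\tr z, y\tr z}s_{y,z}$. Once the positive R3 case is done, the negative R3 and orientation variants follow by applying R2 to reduce to the positive case, so no new algebra is needed. Finally, the statement about the multiset being an invariant follows because every Reidemeister move induces a bijection $\mathcal{C}(L,X)\to\mathcal{C}(L',X)$ under which $|\mathrm{Ker}\, M_{t,s}|$ (resp.\ its rank) is preserved, so the multiset of kernel sizes (resp.\ ranks) is unchanged.
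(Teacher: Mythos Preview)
Your outline is correct and is exactly the standard argument: the quandle module axioms are set up precisely so that each oriented Reidemeister move induces an $R$-linear isomorphism between the bead solution spaces lying over corresponding $X$-colorings, with axiom $t_{x,x}+s_{x,x}=1$ handling R1, invertibility of $t_{x,y}$ handling R2, and the remaining three axioms encoding the R3 coefficient identities.

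The paper, however, does not prove this theorem at all. Immediately before the statement it writes ``By construction (see also [EN] and references therein) we have the following result'' and then states the theorem with no proof, treating it as a known fact imported from the literature on quandle modules. So rather than taking a different route, you have supplied the actual argument that the paper elects to omit by citation. What you have written is essentially the content of the proofs in the cited sources, and nothing in your sketch is wrong or missing a key idea; the only work remaining is the bookkeeping you yourself flag for the R3 variants.
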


\begin{remark}
For convenience we can encode this
multiset as a polynomial in a formal variable $\sigma$ known as the 
\textit{quandle module polynomial invariant},
\[\Phi^{t,s}_X(L)=\sum_{L_c\in\mathcal{C}(L,X)}\sigma^{|\mathrm{Ker} M_{t,s}(L_c)|}\]
if $R$ is finite or 
\[\Phi^{t,s}_X(L)=\sum_{L_c\in\mathcal{C}(L,X)}\sigma^{\mathrm{Rank}(\mathrm{Ker} M_{t,s}(L_c))}\]
if $R$ is infinite.
\end{remark}

\begin{example}
Let $X$, $R$ and $t,s$ be as in Example \ref{ex:qminv} above. Repeating the 
computation of bead colorings for each coloring, we obtain the multiset-valued
invariant value 
\[\Phi^{M,t,s}_X(4_1)=\{
25,25,25,25,
25,25,25,25,
25,25,25,25,
25,25,25,25\}\]
or in polynomial form, $\Phi^{t,s}_X(4_1)=16\sigma^{25}$.
The trefoil knot $3_1$ also has 16 $X$-colorings by this quandle and
thus is not distinguished from the figure eight by the
counting invariant with respect to this particular quandle; however,
$\Phi^{t,s}_X(3_1)=16\sigma^{5}$ and the quandle module enhancement detects
the difference between $3_1$ and $4_1$.
\end{example}

\section{\large\textbf{Quandle Module Quivers}}\label{QMQ}

We can now state our primary new definition.

\begin{definition}
Let $X$ be a finite quandle, $S\subset \mathrm{End}(X)$ a set of endomorphisms,
$R$ a commutative ring with identity and $(t,s)$ an $X$-module with
coefficients in $R$. Then for any oriented link $L$, the 
\textit{quandle module quiver} of $L$ with respect to the above objects
is the quiver $\mathcal{Q}_{X,S}^{t,s}(L)$ obtained by assigning a weight of
$|\mathrm{Ker}M_{t,s}(L_c)|$ 
(or $\mathrm{Rank}(\mathrm{Ker}M_{t,s}(L_c))$ if $R$ is infinite)
to each vertex in the quandle coloring quiver.
When $S=\{f\}$ is a singleton, we will write $\mathcal{Q}_{X,f}^{t,s}(L)$
instead of $\mathcal{Q}_{X,\{f\}}^{t,s}(L)$ for simplicity.
\end{definition}

We have immediately our main result:

\begin{theorem}\label{thm:main}
For any quandle $X$, set of endomorphisms $S\subset \mathrm{End}(X)$, 
ring $R$, and quandle module $(t,s)$, the weighted quiver 
$\mathcal{Q}_{X,S}^{t,s}(L)$ is an invariant of oriented links.
\end{theorem}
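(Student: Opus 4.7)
\medskip

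\noindent\textbf{Proof plan.} My plan is to deduce the theorem by combining two already-established invariance facts: the invariance of the underlying quandle coloring quiver $\mathcal{Q}_{X,S}(L)$ from \cite{CN}, and the per-coloring invariance of the kernel sizes $|\mathrm{Ker}\, M_{t,s}(L_c)|$ that underlies the quandle module polynomial of the previous section. The unifying observation is that both of these invariance statements proceed through the same underlying combinatorial object, namely the canonical bijection on colorings induced by a Reidemeister move.

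First I would reduce by Reidemeister's theorem to the case of two diagrams $D$ and $D'$ of $L$ related by a single oriented Reidemeister move and let $\beta:\mathcal{C}(D,X)\to\mathcal{C}(D',X)$ denote the resulting bijection on $X$-colorings. Since each $f\in S$ is a quandle homomorphism, pointwise application of $f$ commutes with $\beta$: if $\beta(D_c)=D'_{c'}$ then $\beta(f\circ D_c)=f\circ D'_{c'}$. Hence $\beta$ extends to an isomorphism of the underlying directed graphs $\mathcal{Q}_{X,S}(D)\cong\mathcal{Q}_{X,S}(D')$; this is essentially the content of the quiver invariance from \cite{CN}.

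Next I would verify that $\beta$ is weight-preserving, i.e., that $|\mathrm{Ker}\, M_{t,s}(D_c)|=|\mathrm{Ker}\, M_{t,s}(D'_{\beta(c)})|$ for each coloring $c$ (with the analogous equality of ranks in the infinite-$R$ case). This is precisely how the invariance of the quandle module polynomial is established on a move-by-move basis: the quandle module axioms are chosen so that the crossing equations on either side of each Reidemeister move produce linear systems over $R$ whose solution spaces are identified by introducing or eliminating variables whose values are forced by the remainder, leaving the kernel size unchanged.

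The main obstacle, such as it is, is making sure that the bijection used in the quiver invariance and the bijection used in the kernel-size preservation are the \emph{same} bijection rather than merely both being bijections. Both, however, are the canonical Reidemeister coloring correspondence dictated by the quandle axioms, so this compatibility is automatic. Combining the two preceding observations yields a weight-preserving isomorphism $\mathcal{Q}_{X,S}^{t,s}(D)\cong\mathcal{Q}_{X,S}^{t,s}(D')$, completing the plan.
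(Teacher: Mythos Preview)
Your argument is correct and follows essentially the same approach as the paper, which simply observes that all the ingredients---the underlying coloring quiver and the per-vertex weights---are already known to be Reidemeister-invariant. Your version is more explicit in checking that the two invariance statements are mediated by the \emph{same} canonical bijection on colorings, a point the paper takes for granted in its one-sentence proof.
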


\begin{proof}
All of the data required to construct $\mathcal{Q}_{X,S}^{t,s}(L)$ is already
known to be invariant under Reidemeister moves; hence there is nothing to show.
\end{proof}

\begin{example}\label{Ex:qmex}
Taking the same quandle and endomorphism from Example \ref{ex:2} with 
quandle module over $X$ with coefficients in $\mathbb{Z}_4$ given by
\[
\begin{array}{r|rrrr}
t & 1 & 2 & 3 & 4 \\ \hline
1 & 3 & 1 & 1 & 1 \\
2 & 3 & 3 & 3 & 1 \\
3 & 3 & 1 & 3 & 3 \\
4 & 3 & 3 & 1 & 3
\end{array}
\quad\quad
\begin{array}{r|rrrr}
s & 1 & 2 & 3 & 4 \\ \hline
1 & 2 & 2 & 2 & 2\\
2 & 2 & 2 & 2 & 2\\
3 & 2 & 2 & 2 & 2\\
4 & 2 & 2 & 2 & 2
\end{array}\]
we obtain quandle module quiver
\[\includegraphics{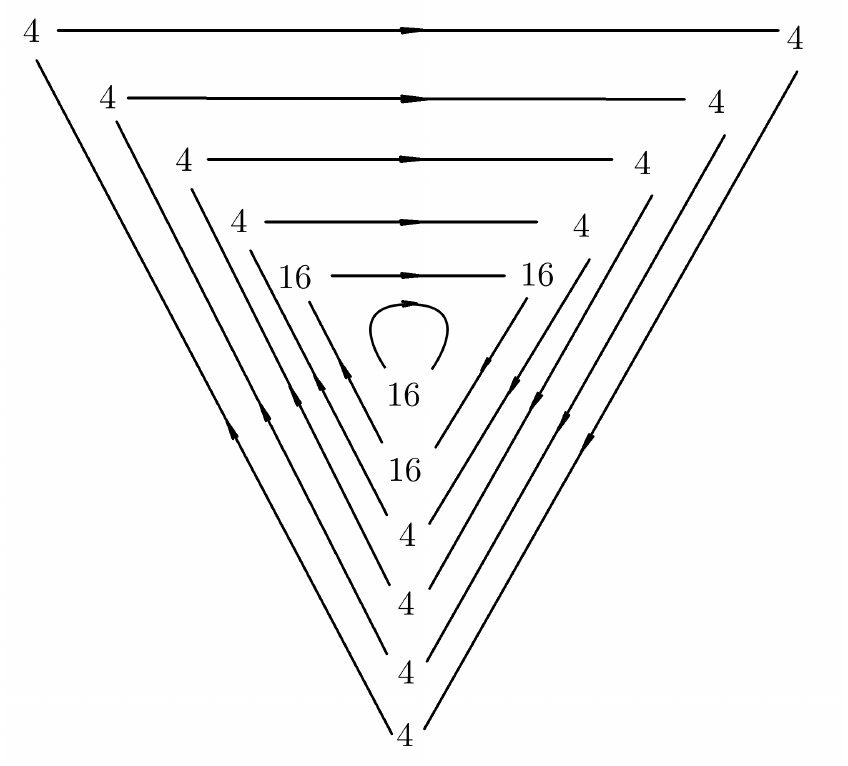}\label{qmex}\]
for the $(4,2)$-torus link.
\end{example}

As with other quandle quiver invariants, for small quivers it is easy enough
to compare the quivers directly, but for large quivers it can be useful to 
extract invariants which are easier to compare. To this end we define a
two-variable polynomial we call the \textit{quandle module quiver polynomial}:

\begin{definition}
Let $X$ be a finite quandle, $S\subset\mathrm{End}(X)$ a set of endomorphisms, 
$R$ a commutative ring with identity and $t,s:X\times X\to R$ a quandle
module with coefficients in $R$. We define the \textit{quandle module
quiver polynomial}, denoted $\Phi_{X,S}^{t,s}(L)$, to be the sum 
over the set of edges in the quandle module quiver of 
$\sigma^{|\mathrm{Ker}M_{t,s}(L_1)|}\tau^{|\mathrm{Ker}M_{t,s}(L_2)|}$
( or $\sigma^{\mathrm{Rank}(\mathrm{Ker}M_{t,s}(L_1))}\tau^{\mathrm{Rank}(\mathrm{Ker}M_{t,s}(L_2))}$ if $R$ is infinite)
where the edge is directed from the vertex corresponding to coloring $L_1$
to the vertex corresponding to coloring $L_2$. That is, we set
\[\Phi_{X,S}^{t,s}(L)=\sum_{e\in E(\mathcal{Q}_{X,S}^{t,s}(L))} \sigma^{w(h(e))}\tau^{w(t(e))}
\]
where $w(h(e))$ and $w(t(e))$ are weights at the head and tail of $e$ 
respectively.
\end{definition}

\begin{example}
The quandle module quiver polynomial for the quandle module quiver \ref{qmex} 
in Example\ref{Ex:qmex} is 
\[12\sigma^4\tau^4+4\sigma^{16}\tau^{16}.\]
\end{example}

Theorem \ref{thm:main} implies the following:
\begin{corollary}
$\Phi_{X,S}^{t,s}(L)$ is an invariant of oriented knots and links.
\end{corollary}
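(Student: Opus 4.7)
The plan is to observe that the polynomial $\Phi_{X,S}^{t,s}(L)$ is defined purely as a function of the combinatorial data of the weighted quiver $\mathcal{Q}_{X,S}^{t,s}(L)$, and then appeal to Theorem \ref{thm:main}. Specifically, I would first unpack the definition: the polynomial is a sum indexed by edges $e \in E(\mathcal{Q}_{X,S}^{t,s}(L))$ of monomials $\sigma^{w(h(e))}\tau^{w(t(e))}$ whose exponents are determined entirely by the vertex weights at the head and tail of each edge. In particular, the polynomial depends on nothing beyond the isomorphism type of the weighted directed graph.

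Next, I would invoke Theorem \ref{thm:main}, which asserts that $\mathcal{Q}_{X,S}^{t,s}(L)$ (as a weighted quiver) is itself an oriented link invariant. Since any function of an invariant is again an invariant, and since $\Phi_{X,S}^{t,s}(L)$ is manifestly such a function, the corollary follows. I would also briefly remark that isomorphic weighted quivers give identical polynomial sums, since relabeling vertices only permutes the indexing set of the sum while preserving each summand $\sigma^{w(h(e))}\tau^{w(t(e))}$.

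The main (and only) obstacle is really bookkeeping rather than mathematical: one should make clear that although the quiver is only determined up to isomorphism by $L$, the polynomial invariant is well-defined because it is insensitive to vertex relabelings. Beyond that verification, there is nothing further to prove, so the proof will be a short paragraph rather than a substantive argument.
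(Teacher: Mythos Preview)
Your proposal is correct and matches the paper's approach exactly: the paper simply states that the corollary follows from Theorem~\ref{thm:main} without further argument, and your expanded reasoning---that the polynomial is a function of the weighted quiver, which is invariant---is precisely the intended justification. Your additional remark about well-definedness under quiver isomorphism is a reasonable elaboration but not strictly necessary.
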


\begin{example}
Let $X$ be the quandle with operation table
\[
\begin{array}{r|rrrrr}
\tr & 1 & 2 & 3 & 4 & 5 \\ \hline
  1 & 1 & 1 & 2 & 2 & 2\\
  2 & 2 & 2 & 1 & 1 & 1\\
  3 & 3 & 3 & 3 & 3 & 3\\
  4 & 5 & 5 & 4 & 4 & 4 \\
  5 & 4 & 4 & 5 & 5 & 5
\end{array}
\]
and let $f$ be the quandle endomorphism $f=[3,3,4,3,3]$.
Our \texttt{python} computations reveal that
$X$ has quandle modules with $\mathbb{Z}_3$ coefficients including
\[
\begin{array}{r|rrrrr}
t & 1 & 2 & 3 & 4 & 5 \\ \hline
1 & 1 & 1 & 1 & 2 & 2\\
2 & 1 & 1 & 1 & 2 & 2\\
3 & 1 & 1 & 1 & 2 & 2\\
4 & 2 & 2 & 1 & 2 & 2\\
5 & 2 & 2 & 1 & 2 & 2\\
\end{array}
\quad\quad
\begin{array}{r|rrrrr}
s & 1 & 2 & 3 & 4 & 5 \\ \hline
1 & 0 & 0 & 0 & 0 & 0\\
2 & 0 & 0 & 0 & 0 & 0\\
3 & 1 & 1 & 0 & 2 & 1\\
4 & 2 & 2 & 0 & 2 & 1\\
5 & 1 & 1 & 0 & 1 & 2
\end{array}
\]
giving us the following invariant values for the links
with up to seven crossings from the Thistlethwaite link table
found at \cite{KA}:
\[
\begin{array}{r|l} 
L & \Phi_{X,f}^{\tau,s}(L) \\ \hline
L2a1 & 4\sigma^9\tau^9 + \sigma^9\tau^3 + 4\sigma^3\tau^9 + 4\sigma^3\tau^3 \\
L4a1 & 16\sigma^9\tau^9 + \sigma^9\tau^3 + 8\sigma^3\tau^9 \\
L5a1 & 20\sigma^9\tau^9 + \sigma^9\tau^3 + 4\sigma^3\tau^9 \\
L6a1 & 21\sigma^9\tau^9 + 4\sigma^3\tau^9 \\
L6a2 & 4\sigma^9\tau^9 + \sigma^9\tau^3 + 4\sigma^3\tau^9 + 4\sigma^3\tau^3 \\
L6a3 & 9\sigma^9\tau^9 + 4\sigma^3\tau^3 \\
L6a4 & 20\sigma^{27}\tau^{27} + 15\sigma^{27}\tau^9 + 40\sigma^9\tau^{27} \\
L6a5 & 8\sigma^{27}\tau^{27} + \sigma^{27}\tau^9 + 8\sigma^9\tau^{27} + 6\sigma^9\tau^9 + 12\sigma^9\tau^3 + 6\sigma^3\tau^9 \\
L6n1 & 8\sigma^{27}\tau^{27} + \sigma^{27}\tau^3 + 6\sigma^9\tau^9 + 12\sigma^9\tau^3 + 8\sigma^3\tau^{27} + 6\sigma^3\tau^9 \\
L7a1 & 25\sigma^9\tau^9 \\
L7a2 & 16\sigma^9\tau^9 + \sigma^9\tau^3 + 8\sigma^3\tau^9 \\
L7a3 & 20\sigma^9\tau^9 + \sigma^9\tau^3 + 4\sigma^3\tau^9 \\
L7a4 & 20\sigma^9\tau^9 + \sigma^9\tau^3 + 4\sigma^3\tau^9 \\
L7a5 & 9\sigma^9\tau^9 + 4\sigma^3\tau^3 \\
L7a6 & 4\sigma^9\tau^9 + \sigma^9\tau^3 + 4\sigma^3\tau^9 + 4\sigma^3\tau^3 \\
L7a7 & 8\sigma^{27}\tau^{27} + \sigma^{27}\tau^3 + 6\sigma^9\tau^9 + 12\sigma^9\tau^3 + 8\sigma^3\tau^{27} + 6\sigma^3\tau^9 \\
L7n1 & 16\sigma^9\tau^9 + \sigma^9\tau^3 + 8\sigma^3\tau^9 \\
L7n2 & 20\sigma^9\tau^9 + \sigma^9\tau^3 + 4\sigma^3\tau^9
\end{array}
\]

In particular, we note that the links $L5a1$ and $L6a1$ have the same quandle
module polynomial value $21\sigma^9+4\sigma^3$ but are distinguished by their 
quandle module quiver polynomials 
\[
\Phi_{X,f}^{t,s}(L5a1)=20\sigma^9\tau^9+\sigma^9\tau^3+4\sigma^3\tau^9
\ne 21\sigma^9\tau^9+4\sigma^3\tau^9=\Phi_{X,f}^{t,s}(L6a1).
\]
Hence, the quandle module quiver polynomial is not determined by the quandle 
module polynomial.
\end{example}

Our next example shows that the new invariant is not determined by the 
quandle coloring quiver.

\begin{example}
In \cite{CN2}, we find an example of two links $L7n1$ and $L7n2$ which have 
isomorphic quandle coloring quivers with respect to the quandle $X$ and 
endomorphism $f$ given by
\[
\begin{array}{r|rrrr}
\tr & 1 & 2 & 3 & 4 \\ \hline
1 & 1 & 1 & 1 & 1 \\
2 & 4 & 2 & 2 & 2 \\
3 & 3 & 3 & 3 & 3 \\
4 & 2 & 4 & 4 & 4
\end{array}\quad f=[1,3,3,3].
\]
Our \texttt{python} computations reveal that the quandle module quiver 
polynomials for these links with respect to the quandle module over 
$\mathbb{Z}_3$ given by
\[
\begin{array}{r|rrrr}
t & 1 & 2 & 3 & 4 \\ \hline
1 & 1 & 1 & 1 & 1 \\
2 & 1 & 1 & 2 & 1 \\
3 & 1 & 1 & 2 & 1 \\
4 & 1 & 1 & 2 & 1
\end{array}\quad
\begin{array}{r|rrrr}
s & 1 & 2 & 3 & 4 \\ \hline
1 & 0 & 0 & 0 & 0 \\
2 & 1 & 0 & 2 & 0 \\
3 & 1 & 0 & 2 & 0 \\
4 & 1 & 0 & 2 & 0
\end{array}
\]
distinguish the links, with 
\[\Phi_{X,f}^{t,s}(L7n1)
=3\sigma^9\tau^9+8\sigma^9\tau^3+4\sigma^3\tau^9+\sigma^3\tau^9
\ne 
7\sigma^9\tau^9+8\sigma^9\tau^3+\sigma^3\tau^9
=\Phi_{X,f}^{t,s}(L7n2).\]
\end{example}

\begin{example}\label{ex:mod6}
For our next example we selected a quandle $X$, endomorphism $f$ and
module $s,t$ with $\mathbb{Z}_6$ coefficients as below and computed the quandle module
quiver polynomial for prime links with up to seven crossings. The results are 
in the table.
\[f=[1,3,2]\quad 
\begin{array}{r|rrr}
\tr & 1 & 2 & 3 \\ \hline
1 & 1 & 1 & 2\\
2 & 2 & 2 & 1\\ 
3 & 3 & 3 & 3
\end{array}
\quad
\begin{array}{r|rrr}
t & 1 & 2 & 3 \\ \hline
1 & 5 & 5 & 5 \\
2 & 5 & 5 & 5 \\
3 & 5 & 5 & 1 
\end{array}
\quad
\begin{array}{r|rrr}
s & 1 & 2 & 3 \\ \hline
1 & 2 & 4 & 1 \\
2 & 4 & 2 & 5 \\
3 & 0 & 0 & 0
\end{array}
\]
\[
\begin{array}{r|l}
\Phi_{X,f}^{\tau,\sigma}(L) & L \\ \hline
L2a1 & \sigma^{36}\tau^{12} + \sigma^{12}\tau^{36} + \sigma^{12}\tau^{12} + 2\sigma^{12}\tau^2 \\
L4a1 & \sigma^{36}\tau^{12} + 4\sigma^{36}\tau^6 + \sigma^{12}\tau^{36} + 3\sigma^{12}\tau^{12} \\
L5a1 & 2\sigma^{36}\tau^{12} + \sigma^{36}\tau^6 + 2\sigma^{36}\tau^2 + 2\sigma^{12}\tau^{36} + 2\sigma^{12}\tau^{12}\\
L6a1 & 3\sigma^{36}\tau^{36} + 2\sigma^{36}\tau^{12} + 4\sigma^{36}\tau^2\\
L6a2 & \sigma^{36}\tau^{12} + \sigma^{12}\tau^{36} + \sigma^{12}\tau^{12} + 2\sigma^{12}\tau^2\\
L6a3 & 3\sigma^{36}\tau^{36} + 2\sigma^{36}\tau^2\\
L6a4 & 3\sigma^{216}\tau^{72} + 3\sigma^{72}\tau^{216} + 9\sigma^{72}\tau^{72} + 6\sigma^{72}\tau^{36}\\
L6a5 & \sigma^{216}\tau^{72} + \sigma^{72}\tau^{216} + \sigma^{72}\tau^{72} + 6\sigma^{72}\tau^{12} + 6\sigma^{12}\tau^4\\
L6n1 & \sigma^{216}\tau^{24} + \sigma^{24}\tau^{216} + \sigma^{24}\tau^{24} + 6\sigma^{24}\tau^{12} + 3\sigma^{12}\tau^8 + 3\sigma^{12}\tau^4\\
L7a1 & 4\sigma^{36}\tau^{36} + 2\sigma^{36}\tau^{12} + 3\sigma^{36}\tau^6 \\
L7a2 & \sigma^{36}\tau^{12} + \sigma^{36}\tau^6 + 3\sigma^{36}\tau^2 + \sigma^{12}\tau^{36} + 3\sigma^{12}\tau^{12}\\
L7a3 & 2\sigma^{36}\tau^{12} + 2\sigma^{36}\tau^6 + \sigma^{36}\tau^2 + 2\sigma^{12}\tau^{36} + 2\sigma^{12}\tau^{12}\\
L7a4 & \sigma^{36}\tau^{18} + 2\sigma^{36}\tau^{12} + \sigma^{36}\tau^6 + \sigma^{36}\tau^2 + 2\sigma^{12}\tau^{36} + 2\sigma^{12}\tau^{12} \\
L7a5 & 3\sigma^{36}\tau^{36} + 2\sigma^{36}\tau^2\\
L7a6 & \sigma^{36}\tau^{12} + \sigma^{12}\tau^{36} + \sigma^{12}\tau^{12} + 2\sigma^{12}\tau^2 \\
L7a7 & \sigma^{216}\tau^{24} + \sigma^{24}\tau^{216} + \sigma^{24}\tau^{24} + 5\sigma^{24}\tau^{12} + \sigma^{24}\tau^4 + \sigma^{12}\tau^{36} + 3\sigma^{12}\tau^{12} + 2\sigma^{12}\tau^4 \\
L7n1 & \sigma^{36}\tau^{12} + 2\sigma^{36}\tau^4 + 2\sigma^{36}\tau^2 + \sigma^{12}\tau^{36} + 3\sigma^{12}\tau^{12}\\
L7n2 & 3\sigma^{36}\tau^{12} + \sigma^{36}\tau^6 + \sigma^{36}\tau^2 + 2\sigma^{12}\tau^{36} + 2\sigma^{12}\tau^{12}\\
\end{array}
\]
\end{example}

\begin{example}
We note that the combination of quandle, module and endomorphism in Example
\ref{ex:mod6} shows that
the invariant can be nontrivial for knots, even when the set of
quandle colorings seems trivial. Both the trefoild and the unknot
have quandle coloring quiver consisting of one bigon and one loop.
However, the trefoil's three $X$-colorings yield
three bead-coloring matrices over $\mathbb{Z}_6$, two of which are
\[
\left[\begin{array}{rrr}
5 & 2 & 5 \\
5 & 5 & 2 \\
2 & 5 & 5
\end{array}
\right]\stackrel{\mathrm{Row\ Equiv.\ over}\ \mathbb{Z}_6}{\longleftrightarrow}
\left[\begin{array}{rrr}
1 & 1 & 4 \\
0 & 3 & 3 \\
0 & 0 & 0
\end{array}
\right],
\quad |\mathrm{Ker}(M)|=6(3)=18
\] 
and one of which is
\[
\left[\begin{array}{rrr}
1 & 0 & 5 \\
5 & 1 & 0 \\
0 & 5 & 1
\end{array}
\right]\longleftrightarrow
\left[\begin{array}{rrr}
1 & 0 & 5 \\
0 & 1 & 5 \\
0 & 0 & 0
\end{array}
\right],
\quad |\mathrm{Ker}(M)|=6.
\]
Then the quandle module quiver is
\[\includegraphics{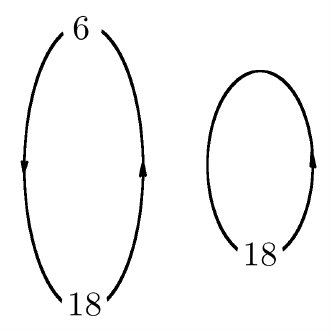}\]
and we obtain quandle module quiver polynomial
\[\Phi_{X,f}^{\tau,\sigma}(3_1)=\sigma^{18}\tau^6+\sigma^{6}\tau^{18}
+\sigma^{18}\tau^{18}.\]
This distinguishes the trefoil from the unknot which has 
quandle module quiver polynomial
\[\Phi_{X,f}^{\tau,\sigma}(0_1)=3\sigma^{6}\tau^6.\]
\end{example}

\section{\large\textbf{Quandle Module Quiver Questions}}\label{QQQ}

We conclude with some questions for future research, noting that this paper 
is only intended to introduce the new structure of quandle module quivers 
and its associated knot invariants -- much remains to be discovered.

What is the precise relationship between quandle module quivers and quandle 
cocycle quivers? A quandle module quiver is a quiver with vertices decorated 
with modules; this setup is almost begging for a Khovanov-style chain complex 
construction. What maps, if any, can be assigned to the arrows in the quiver
in a natural (functorial) way? What is the geometric meaning of these 
algebraic and combinatorially-defined invariants?

\bibliography{ki-sn}{}
\bibliographystyle{abbrv}

\bigskip

\noindent
\textsc{Department of Mathematical Sciences \\
Claremont McKenna College \\
850 Columbia Ave. \\
Claremont, CA 91711} 

\

\end{document}